\documentclass[12pt,a4paper]{article}
\usepackage[utf8]{inputenc}
\usepackage{amsmath,amssymb,amsthm}
\usepackage[margin=1in]{geometry}
\usepackage{hyperref}
\usepackage[dvipsnames]{xcolor}
\usepackage{comment}

\theoremstyle{plain}
\newtheorem{theorem}{Theorem}[section]
\newtheorem{lemma}[theorem]{Lemma}
\newtheorem{proposition}[theorem]{Proposition}

\newtheorem{conjecture}[theorem]{Conjecture}
\theoremstyle{definition}
\newtheorem{definition}[theorem]{Definition}
\theoremstyle{remark}
\newtheorem{remark}[theorem]{Remark}

\newcommand{\defeq}{\stackrel{\textnormal{def}}{=}}

\newenvironment{salign}{
    \begin{equation}
    \begin{aligned}
}{
    \end{aligned}
    \end{equation}
    \ignorespacesafterend
}

\newcommand{\Z}{\mathbb{Z}}
\newcommand{\Q}{\mathbb{Q}}

\newcommand{\R}{\mathbb{R}}

\renewcommand{\P}{\mathbb{P}}

\DeclareMathOperator{\Ht}{Ht}

\title{Lattice Point Visibility Along Powers of Polynomials}

\author{
  Abraham Lobsenz \\
  \and
  Tristan Phillips \\
}

\date{}

\begin{document}

\sloppy

\maketitle

\begin{abstract}
We study lattice point visibility along polynomial lines of sight and give a new proof of the Visibility Density Conjecture of Chaubey and Pandey for a large class of polynomials.
\end{abstract}

%% ===================================================================
\section{Introduction}\label{sec:intro}
%% ===================================================================

Lattice point visibility is a classical problem in number theory.
A point $(a,b)$ in the integer lattice $\Z\times \Z$ is said to be \textit{visible from the origin} if the line segment between the points $(0,0)$ and $(a,b)$ contains no additional lattice points. The lattice point visibility problem asks: What proportion of lattice points are visible from the origin?

Since $(a,b)$ being visible from the origin is equivalent to $\gcd(a,b)=1$,  the lattice point visibility problem is equivalent to asking for the probability that two randomly chosen integers are coprime. This reformulation relates the problem to the Basel problem, posed by Mengoli in 1650, which asks for the precise value of the sum
\begin{equation}
    \zeta(2) = \sum_{n=1}^\infty \frac{1}{n^2}.
\end{equation}
Euler's celebrated 1734 solution of this problem showed that
\begin{equation}
    \zeta(2) = \prod_{p \text{ prime}} \left( 1 - \frac{1}{p^2}\right)^{-1} =\frac{\pi^2}{6}.
\end{equation}
Since the probability that two integers are not both divisible by a given prime $p$ is $1-1/p^2$, this shows that the probability that two integers are coprime is $6/\pi^2$, and hence the density of visible lattice points is also $6/\pi^2$. Subsequent work  justifying and extending Euler's argument was made by Weierstrass, Mertens, Dirichlet, Ces\`{a}ro, and Sylvester.
For more modern treatments of the classic lattice point visibility problem, see \cite[Theorem 332]{HardyWright} and \cite[\S 3.2]{Apostol}.

In this paper we consider lattice point visibility along polynomial lines of sight. Let $F(x)\in \Z[x]$ be a polynomial with positive leading coefficient. 
A lattice point $(a,h)\in \Z_{>0}\times \Z_{>0}$ is \emph{visible along $F(x)$} if there exists a rational number $t\in \Q$ such that $h=t\, F(a)$ and $a$ is the smallest positive integer $u$ such that $t\, F(u)$ is a positive integer; otherwise $(a,h)$ is said to be \textit{invisible along $F(x)$}.
This notion recovers classic lattice point visibility when $F(x)=x$, and generalizes the notions of polynomial lattice point visibility considered in \cite{GoinsHarrisKubikMbirika, ChaubeyPandey}, in which polynomials were  assumed to pass through the origin. 

Define the density of visible lattice points as
\begin{equation}
    D(F) \defeq \lim_{N\to\infty} \frac{\#\{(a,b)\in \Z_{>0}^2 : \max\{a,b\}\leq N \text{ and  $(a,b)$ is visible along $F(x)$}\}}{N^2}.
\end{equation}
For the classic lattice point visibility problem, one has $D(x)=1/\zeta(2)=6/\pi^2$.  Goins, Harris, Kubik, and Mbirika \cite{GoinsHarrisKubikMbirika} extended this to monomial lines of sight, proving that $D(\alpha x^b)=1/\zeta(1+b)$. 

For polynomials with more than one distinct root a striking change in behavior is expected: \textit{almost all} lattice points are expected to be visible. We formalize this expectation in the following conjecture, which extends a conjecture of Chaubey and Pandey \cite[Conjecture 1.6]{ChaubeyPandey} in the case of polynomials passing through the origin:

\begin{conjecture}[Visibility Density Conjecture]\label{conj:visibility_conjecture}
    If $F(x)\in \Z[x]$ is a polynomial with at least two distinct roots, then the set of lattice points in $\Z_{>0}\times \Z_{>0}$ that are visible along $F(x)$ has density $1$, i.e., 
    \begin{equation}
         D(F)= 1.
    \end{equation}
\end{conjecture}

Let $a,b,u,v\in \Z$ with $au\neq 0$ and $b>0$. Then the polynomials excluded from Conjecture \ref{conj:visibility_conjecture} are precisely the nonzero constant polynomials and the polynomials of the form
\begin{equation}
F(x)=a(ux+v)^b.
\end{equation}
By possibly rescaling, we may assume $\gcd(u,v)=1$. Making the linear change of variables $ux+v\mapsto X$, lattice point visibility along $F(x)$ becomes equivalent to lattice point visibility along $aX^b$ with the lattice 
\begin{equation}\label{eq:lattice_with_local_condition}
    \{(c,d)\in \Z_{>0}\times \Z_{>0} : c\equiv v \pmod{u}\}.
\end{equation}
A slight modification of \cite{GoinsHarrisKubikMbirika} then shows
\begin{equation}
    D(a(ux+v)^b) = \prod_{p\nmid u} \left(1-\frac{1}{p^{1+b}}\right) = \frac{1}{\zeta(1+b)}\prod_{p|u} \left(1 - \frac{1}{p^{1+b}}\right)^{-1}.
\end{equation}
Alternatively, this can be derived as a special case of a result of the second author \cite[Theorem 4.0.5]{Phillips}.\footnote{See Remark \ref{rem:batyrev-manin} for further discussion.}

\begin{comment}
Indeed, choose a prime $q\equiv 1 \pmod{|u|}$. If $q\mid (ux+v)$, then
\[
x'=\frac{(ux+v)/q-v}{u}\in \Z,
\]
and for all sufficiently large $x$ we have $0<x'<x$. Thus, if in addition $q^b\mid h$, then
\[
\frac{h}{F(x)}\,F(x')=\frac{h}{q^b}\in \Z_{>0},
\]
so $(x',h/q^b)$ blocks $(x,h)$ along $F$. Hence a positive proportion of lattice points are invisible, and therefore any limiting density for such an $F$ must be strictly less than $1$.

However, a linear change of variable does \emph{not} in general preserve the exact value of the density. For example,
\[
D(2x+1)=\prod_{p\neq 2}\left(1-\frac{1}{p^2}\right)=\frac{8}{\pi^2}<1,
\]
whereas
\[
D(x)=\frac{6}{\pi^2}.
\].
\end{comment}

In \cite{ChaubeyPandey}, Chaubey and Pandey make progress towards Conjecture \ref{conj:visibility_conjecture} by establishing asymptotic lower bounds for $D(F)$ when $F(x)\in \Z[x]$ is a polynomial with coefficients that are coprime positive integers and $x$ divides $F(x)$. They make this lower bound explicit in the case $F(x)=x^2+mx$ with $m\geq 1$. Recently, Ahuja \cite{Ahuja} further explored polynomial lattice point visibility, obtaining an exact formula for the number of visible lattice points using inclusion-exclusion and other related results.

After the present work was completed and posted, we were kindly informed by Chaubey that Conjecture \ref{conj:visibility_conjecture}, in the case of polynomials passing through the origin, has been resolved by Chaubey, Pandey, and Regavim \cite{ChaubeyPandeyRegavim}, who prove that $D(F)=1$ for every such polynomial that is not of the form $\alpha x^b$. Their work, which was carried out independently of and was not available to us during the preparation of this article, settles the original conjecture \cite[Conjecture 1.6]{ChaubeyPandey} in full. In the range where our results overlap, our methods yield a stronger quantitative bound (see Remark \ref{rem:CPR_comparison}).

The main result of this paper verifies Conjecture \ref{conj:visibility_conjecture} for a large class of polynomials:
        
\begin{theorem}\label{thm:main}
    Let $f(x)\in \Z[x]$ be a polynomial of degree $d\geq 2$ with positive leading coefficient and having at least two distinct roots over an algebraic closure $\overline{\Q}$ of $\Q$, and let $m\in \Z_{\geq 2}$. 
    Then Conjecture \ref{conj:visibility_conjecture} is true for $F(x)=f(x)^m$.
\end{theorem}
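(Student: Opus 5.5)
The plan is to bound the number of invisible points $(a,h)$ with $a,h\le N$ by $o(N^2)$. I would first make the definition concrete. Write $g=\gcd(h,F(a))$ and $s=F(a)/g$. Then $t=h/F(a)$ has reduced denominator $s$, and $tF(u)\in\Z_{>0}$ exactly when $s\mid F(u)$ and $F(u)>0$. So $(a,h)$ is invisible iff there is $0<u<a$ with $F(u)>0$ and
\[
\frac{f(a)^m}{g}\ \Big|\ f(u)^m .
\]
Set $G=\gcd(f(a),f(u))$. Since $s$ divides both $f(a)^m$ and $f(u)^m$, it divides $G^m$, so $g\ge (|f(a)|/G)^m$. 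Because $g\le h\le N$, invisibility forces a large common divisor:
\[
G \;\ge\; |f(a)|\,N^{-1/m}\;\ge\; |f(a)|\,N^{-1/2},
\]
using $m\ge 2$. This is where the hypothesis $m\ge2$ enters: it makes the shared divisor nearly as large as $f(a)$ itself.

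For the counting I would discard $a\le N^{1-\delta}$ (at most $N^{2-\delta}$ points) and the $O(1)$ values of $a$ where $f(a)=0$. The number of $h\le N$ with $\gcd(h,F(a))=g$ is at most $N/g$. Summing over the $\ll N^{\varepsilon}$ divisors $g$ of $F(a)$, the invisible points above a fixed $a$ are
\[
\ll N^{1+\varepsilon}\max_{u<a}\bigl(G(a,u)/|f(a)|\bigr)^m ,
\]
but only when some $u<a$ realises $G(a,u)\ge |f(a)|N^{-1/m}$. It therefore suffices to show that the set $\mathcal B$ of such \emph{bad} $a\in[N^{1-\delta},N]$ has $\#\mathcal B=o(N)$. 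Any fixed power saving $N^{1-\eta}$ then gives $D(F)=1$.

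To control $\mathcal B$, I would use the two distinct roots. Put $k=a-u\in(0,a)$. Any common divisor $D$ of $f(a)$ and $f(a-k)$ divides a combination of the form $A(x)f(x)+B(x)f(x-k)=R(k)$, where $R(k)$ is, up to fixed constants, $\operatorname{Res}_x(f(x),f(x-k))$. This is a polynomial in $k$ of degree $\le d^2$. It vanishes only for the finitely many $k$ that are differences of roots, and since $f$ has two distinct roots, $f(x)\ne c f(x-k)$ for $k\ne0$. The finitely many exceptional $k$ are handled separately: $\gcd(f(a),f(a-k))$ then grows only like a lower-degree polynomial factor, and the distinctness of the roots keeps this factor of degree $<d$. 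That is too small to meet $|f(a)|N^{-1/m}\gg a^{d}N^{-1/2}$ once $d\ge2$ and $a\ge N^{1-\delta}$.

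For generic $k$, I would run a divisor-sieve count. For each divisor $D\mid f(a)$ with $D\ge a^dN^{-1/2}$, the bad $u$ are roots of $f$ modulo $D$ in $[1,a)$. If $D>a$, the root $u\equiv a\pmod D$ is excluded, so $u$ lies in one of the $\rho(D)-1\ll N^{\varepsilon}$ other residue classes. Reversing the roles, I would count pairs $(a,u)$ with $a,u\le N$ that share a divisor $D\ge N^{d-1/2-\delta d}$ via
\[
\sum_{D}\#\{a,u\le N:\ f(a)\equiv f(u)\equiv 0 \pmod D,\ a\ne u\}\ \ll\ \sum_{D\ge N^{d-1/2-\delta d}} \rho(D)^2\Bigl(\frac{N}{D}+1\Bigr)^2 .
\]
This sum is too crude as written, because the "$+1$" terms dominate, and I expect that step to be the main obstacle.

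What I would try first is to split $D=D_1D_2$ according to the factorisation of $f$ into coprime factors $f_1f_2$ with distinct roots. Then $D_1\mid \gcd(f_1(a),f_1(u))$ and $D_2\mid\gcd(f_2(a),f_2(u))$. Each piece also divides the corresponding resultant in $k$, whose size is only $\ll k^{O(1)}$ but which must divide a number of size $a^{\deg f_i}$. Combining the congruence $u\equiv a$ modulo most of the prime factors with the bound $k<a$ should force $D\ll a^{d-1+o(1)}$ for all but $O(N^{1-\eta})$ values of $a$. That contradicts $D\ge a^dN^{-1/2}$ when $a\ge N^{1-\delta}$ and $\delta$ is small. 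If this resultant argument is insufficient, the fallback is a large-sieve or $abc$-free Bombieri–Pila-type bound for integer points on the curve $\{f(x)\equiv f(y)\equiv0 \bmod D\}$, giving the required power saving.
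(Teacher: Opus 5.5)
Your first half is correct. That covers the invisibility criterion, the deduction $F(a)/\gcd(h,F(a))\mid \gcd(f(a),f(u))^m$, and the per-$a$ bound $\ll N^{1+\varepsilon}\max_{u<a}\bigl(G(a,u)/|f(a)|\bigr)^m$. The gap comes right after, so the decisive estimate is missing.

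\textbf{The reduction to $\#\mathcal B=o(N)$ is lossy and unproved.} When you reduce to $\#\mathcal B=o(N)$, you charge each bad $a$ with up to $N$ invisible points. That discards the factor $(G/|f(a)|)^m=s(a,u)^{-m}$, where $s(a,u)=f(a)/\gcd(f(a),f(u))$, and this decay is exactly what makes the problem tractable. Without it you must control every pair $(a,u)$ whose reduced ratio $f(a):f(u)=s:r$ has $s\le N^{1/m}$. For $m=2$ there are $\asymp N$ such ratios, and nothing you propose handles them:
\begin{itemize}
\item The resultant step gives nothing for most $k$. $\gcd(f(a),f(a-k))$ divides $R(k)=\mathrm{Res}_x(f(x),f(x-k))$, but $|R(k)|\asymp k^{d^2}$. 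This already exceeds $|f(a)|$ once $k$ is larger than about $a^{1/d}$, so the divisibility says nothing for almost all $k\in(0,a)$.
\item The claim that $u\equiv a$ modulo ``most'' prime factors of $D$ is unjustified. For $p\mid D$, $u$ can be any root of $f$ mod $p$.
\item The splitting $f=f_1f_2$ into coprime factors does not exist when $f$ is irreducible over $\Q$ (e.g.\ $f=x^2+1$).
\item $\{f(x)\equiv f(y)\equiv 0 \bmod D\}$ is not a curve to which a Bombieri--Pila bound applies.
\end{itemize}
Even the paper's tools cannot rescue your reduction for small $m$. Counting pairs with $s\le N^{1/m}$ curve by curve, using the $N^{1/2+\varepsilon}$ bound per curve, gives only $N^{2/m+1/2+\varepsilon}$, which is not $o(N)$ for $m\le 4$.

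\textbf{The fix is to keep the weight.} Sum your own per-$a$ bound over $a$ and replace the max by a sum. This gives
\[
\#\mathrm{Invisible}_F(N)\ll N^{1+\varepsilon}\sum_{a\le N}\sum_{u<a}s(a,u)^{-m}+O(N^{2-\delta}).
\]
Here $u$ ranges over $u$ with $f(u)\neq 0$. The finitely many $u$ with $f(u)$ nonpositive or bounded have bounded $\gcd(f(a),f(u))$ and contribute negligibly. Then proceed as follows.
\begin{enumerate}
\item Group the pairs by $(s,r)=(f(a),f(u))/\gcd(f(a),f(u))$, with $s>r\ge1$. Each such $(a,u)$ is an integer point on the curve $s\,f(y)-r\,f(x)=0$.
\item The two-distinct-roots hypothesis shows this curve has no linear component over $\overline{\Q}$. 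An affine $T$ with $s f(y)=r f(T(y))$ would permute the roots of $f$. Some iterate $T^k$ would then fix two points, hence be the identity, forcing $r^k=s^k$.
\item Pila's uniform bound then gives $\ll_{d,\varepsilon}N^{1/2+\varepsilon}$ integer points in $[1,N]^2$ on each curve.
\item Since $m\ge 2$, $\sum_{s\le f(N)}\sum_{r<s}s^{-m}\ll \log N$.
\end{enumerate}
Together these give $\#\mathrm{Invisible}_F(N)\ll N^{3/2+\varepsilon}=o(N^2)$. This is the paper's argument, organised there as $\#\mathrm{Invisible}_F(N)\le N\,S_F(N)+O(N)$ with $S_F(N)=\sum_{b<a}s(a,b)^{-m}$.
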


More precisely, we prove the following quantitative upper bound for the number of invisible lattice points for the polynomials in Theorem \ref{thm:main}:

\begin{theorem}\label{thm:main2}
    Let $F(x)$ be as in Theorem \ref{thm:main}. 
    For each $r,s\in \Z_{>0}$   with $s>r$, define the polynomial
    \begin{equation}
        G_{s,r}(x,y)\defeq s\, f(y) - r\, f(x).
    \end{equation}
    Then there is an integer  $\delta_F\geq 2$ such that $ G_{s,r}(x,y)$ has no irreducible factors of degree less than $\delta_F$ over $\R$ for all $r$ and $s$, and the number of invisible lattice points along $F(x)$ is $O_{F,\varepsilon}(N^{1+1/\delta_F+\varepsilon})$.
\end{theorem}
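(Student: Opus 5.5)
The plan is to bound the invisible points directly by counting integral points on the curves $G_{s,r}(x,y)=0$. First I characterize invisibility. Let $(a,h)$ be invisible along $F=f^m$, so with $t=h/F(a)$ there is a positive integer $u<a$ with $tF(u)=h\,f(u)^m/f(a)^m\in\Z_{>0}$. Since $f$ has positive leading coefficient, there is a constant $A_0=A_0(f)$ such that for $a>A_0$ we have $f(a)>0$ and $|f(u)|<f(a)$ for all $1\le u<a$. The points with $a\le A_0$ contribute only $O(N)$, so I assume $a>A_0$. Write $|f(u)|/f(a)=r/s$ in lowest terms. Here $r\ge 1$ because $tF(u)>0$, and $r<s$. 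Then $h r^m/s^m\in\Z$ forces $s^m\mid h$, so $2\le s\le N^{1/m}$. Moreover $(a,u)$ is an integral point with $1\le u<a\le N$ on $G_{s,\pm r}(x,y)=s f(y)\mp r f(x)=0$; the sign only matters when $m$ is even, and I simply allow both signs. Consequently
\begin{equation*}
\#\{\text{invisible }(a,h)\}\ \le\ O(N)+\sum_{2\le s\le N^{1/m}}\ \sum_{\substack{1\le r<s\\ \pm}}\ \frac{N}{s^m}\cdot \#\{(x,y)\in[1,N]^2 : G_{s,\pm r}(x,y)=0\}.
\end{equation*}

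Second, I establish the existence of $\delta_F\ge 2$, i.e.\ that no $G_{s,r}$ with $0<|r|<s$ has a factor of degree $1$ over $\R$. Since $G_{s,r}$ has degree $d$ in $y$ with constant leading coefficient, a linear factor must have the form $y-\alpha x-\beta$. That factor gives $s f(\alpha x+\beta)=r f(x)$ identically. Comparing leading coefficients yields $|\alpha|^d=|r|/s<1$. Comparing root sets shows that the affine map $z\mapsto(z-\beta)/\alpha$ permutes the roots of $f$. However, this map is a strict expansion, multiplying every distance by $|\alpha|^{-1}>1$, so it cannot permute a finite set of at least two distinct points. This contradiction shows no such factor exists. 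I then take $\delta_F$ to be the minimum degree of a real irreducible factor over all admissible $(s,r)$; this minimum is at least $2$ and at most $d$, so it is well defined.

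Third, I bound the integral points uniformly. Factor $G_{s,r}$ over $\overline{\Q}$; there are at most $d$ factors. A factor that is not, up to a scalar, defined over $\R$ shares its real points with its complex conjugate factor, so by B\'ezout it has $O(d^2)$ real points. A factor that is absolutely irreducible and real has degree $D\ge\delta_F$. By the uniform Bombieri--Pila bound, whose implied constant depends only on $D$ and $\varepsilon$ and not on the coefficients, such a factor has $O_{d,\varepsilon}(N^{1/D+\varepsilon})\le O(N^{1/\delta_F+\varepsilon})$ points in $[1,N]^2$. Inserting this into the display gives
\begin{equation*}
\ll\ N+\sum_{s\le N^{1/m}} s\cdot\frac{N}{s^m}\,N^{1/\delta_F+\varepsilon}\ \ll\ N^{1+1/\delta_F+\varepsilon}\log N,
\end{equation*}
using $m\ge 2$. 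Absorbing the logarithm into $\varepsilon$ gives the bound $O_{F,\varepsilon}(N^{1+1/\delta_F+\varepsilon})$. Since $1+1/\delta_F<2$, Theorem \ref{thm:main} follows.

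The main obstacle is the uniformity in $(s,r)$ in the third step. Bombieri--Pila must be applied with constants independent of the heights of the coefficients of $G_{s,r}$, which grow with $s$. It must also be applied only to absolutely irreducible components, which is why the reduction from real-irreducible to absolutely irreducible factors has to be done carefully. If Bombieri--Pila in the required uniform form is unavailable, I would instead use the Heath-Brown or Walsh uniform bounds for plane curves, which suffice here.
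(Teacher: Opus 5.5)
Your argument is correct. It shares the paper's skeleton: each blocking pair $(a,u)$ contributes at most $N/s^m$ invisible points, such pairs are integral points on $G_{s,r}=0$, no $G_{s,r}$ has a real linear factor, and the uniform Bombieri--Pila bound is applied factor by factor. It differs from the paper in three places.

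\emph{Small blockers.} The paper splits off blockers with first coordinate at most $n_F$ into the set $E_F(N)$ and bounds that set by an injectivity argument. You instead keep every blocker $u<a$ and absorb the possibly negative values $f(u)$ (relevant when $m$ is even) by also admitting the curves $sf(y)+rf(x)=0$. This is simpler, but your $\delta_F$ is then a minimum over $0<|r|<s$ rather than $0<r<s$. So it is at most the paper's $\delta_F$, and your exponent is potentially weaker, though still enough for the existential statement and for density one. You can recover the paper's exponent at no cost. Since $f(u)>0$ whenever $u>A_0$, negative signs arise only from blockers $u\le A_0$, and these contribute at most $\sum_{u\le A_0}\sum_{a>A_0}N\,|f(u)|^m/f(a)^m\ll_F N$ points in total.

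\emph{No linear factor.} Your proof compares leading coefficients to get $|\alpha|^d=|r|/s<1$, then notes that a strictly expanding affine map cannot permute a finite set of at least two points. This replaces the paper's argument that $T^k=\mathrm{id}$ forces $r^k=s^k$. It is equally short and covers negative $r$ directly.

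\emph{Absolute irreducibility.} You pass to absolutely irreducible components and dispose of non-real conjugate pairs with B\'ezout. This makes explicit why $\R$-irreducible factors that split over $\mathbb{C}$ are harmless: they have only $O(d^2)$ real points. The paper passes over this point when it applies Pila's bound directly to $\R$-irreducible factors.
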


\begin{remark}
    Note that if Conjecture \ref{conj:visibility_conjecture} holds for a polynomial $F(x)$, then it holds for any nonzero multiple of $F(x)$. In particular, if the conjecture holds for $F(x)$ it will also hold for $-F(x)$. Thus it would suffice to prove Conjecture \ref{conj:visibility_conjecture} for $F(x)$ with positive leading coefficient. For this reason, the condition that $f(x)$ (and hence $F(x)$) has positive leading coefficient in Theorem \ref{thm:main} is not a significant restriction.
\end{remark}

\begin{remark}\label{rem:batyrev-manin}
    Interpreting the classical lattice point visibility problem in terms of lines of rational slope through the origin, it turns out that the problem is closely connected to the problem of counting rational points of bounded height on the projective line $\P^1$. Indeed, if $\Ht$ denotes the usual Weil height on $\P^1(\Q)$, then
    \begin{equation}
        \#\{[a:b]\in \P^1(\Q) : \Ht([a:b])\leq N\} \sim\frac{2}{\zeta(2)} N^2.
    \end{equation}
    Here, the extra factor of $2$ appears since lines of \textit{negative} rational slope are also counted. Similarly, the lattice point visibility problem along the polynomials $\alpha x^b$ is equivalent to counting rational points of bounded height on the weighted projective lines $\P(1,b)$. A special case of a result of Deng \cite{Deng} shows that
       \begin{equation}
        \#\{[a:b]\in \P(1,b)(\Q) : \Ht([a:b])\leq N\} \sim
        \begin{cases}
            \frac{2}{\zeta(1+b)}N^2 & \text{ if } 2\nmid b,\\
            \frac{4}{\zeta(1+b)}N^2 & \text{ if } 2|b, 
        \end{cases}
    \end{equation}
    predating \cite{GoinsHarrisKubikMbirika} by nearly two decades. 
    Similarly, results of Benedetti, Estupi\~n\'an, and Harris~\cite{BenedettiEstupinanHarris} on higher dimensional lattice point visibility can be viewed as counting rational points of bounded height in weighted projective spaces $\P(b_1,\dots,b_n)$, which is again a special case of work of Deng \cite{Deng}.

    In \cite{Phillips} the second author proves a result on counting rational points of bounded height on weighted projective stacks which satisfy local conditions. Such results allow one to count visible points in subsets of lattices with congruence conditions, such as the lattice \eqref{eq:lattice_with_local_condition}.
    
    More generally, the Batyrev--Manin conjecture \cite{FrankeManinTschinkel, BatyrevManin, Peyre}, concerning the number of rational points of bounded height on Fano varieties, can be thought of as counting visible lattice points in certain regions. 
    In contrast to much of the previous work on lattice point visibility, the problems we consider are not equivalent to counting rational points of bounded height on some projective variety in the same way since $y-F(x)$ will be neither homogeneous nor weighted homogeneous. 
\end{remark}

%TODO: discuss methods and how $S_F(N)$ sums arise, and our bound for them. (optional) 
%TODO: definitely mention used of Pila-Bombieriri, `real determinant method'

\begin{remark}\label{rem:CPR_comparison}
    In the cases where $F(0)=0$, Theorem \ref{thm:main2} improves upon results in \cite{ChaubeyPandeyRegavim}, which establish that the number of invisible lattice points is $O_{F,\varepsilon}(N^{2-1/(n^2-1)+\varepsilon})$ in general, and $O_{F,\varepsilon}(N^{2-1/(2n-1)+\varepsilon})$ when $F$ is separable.
\end{remark}

\begin{remark}
    In forthcoming work, using different methods, we will improve the bounds in Theorem \ref{thm:main2} in the case $d=2$, showing that the number of invisible lattice points is
    \begin{equation}
        \begin{cases}
            O_{F,\varepsilon}(N^{1+\varepsilon}) & \text{ if } m=2,\\
            O_{F}(N\log(N)) & \text{ if } m\ge 3.
        \end{cases}
    \end{equation}
\end{remark}

\subsection*{Acknowledgements}
We would like to thank Asher Auel and Akash Singha Roy for interesting conversations related to this project.
We would also like to thank Sneha Chaubey, Ashish Kumar Pandey, and Shvo Regavim for sharing a preprint of their paper \cite{ChaubeyPandeyRegavim} with us.
TP was supported by the National Science Foundation, via grant DMS-2303011.

%% ===================================================================
\section{Reducing the visibility problem to a GCD sum}\label{sec:visibility}
%% ===================================================================

In this section we cover preliminaries on visibility along polynomials, give a sufficient condition for proving the Visibility Density Conjecture in terms of the growth rate of a certain double sum (Proposition~\ref{prop:S_F(N)_to_density_conjecture}), and rewrite this sum in a useful way.

Let $F(x)\in \Z[x]$ be a polynomial with positive leading coefficient. 

\begin{definition}[Visibility along~$F$]\label{def:visibility}
     A lattice point $(a,h)\in \Z_{>0}\times \Z_{>0}$ is \emph{visible along $F(x)$} if there exists a rational number $t\in \Q$ such that $h=t\, F(a)$ and $a$ is the smallest positive integer $u$ such that $t\, F(u)$ is a positive integer.
    A lattice point is called \emph{invisible along $F(x)$} if it is not visible along $F(x)$.
    Finally, we say that a lattice point $(b,k)$ \emph{blocks $(a,h)$ from being visible along $F(x)$} if $b<a$ and there exists a $t\in \Q$ such that $h=t\,F(a)$ and $k=t\, F(b)$.
\end{definition}

For integers $M,N$, define the set $[M,N]\defeq\{n\in \Z : M\le n\le N\}$, which is empty if $M>N$.
Define the set of visible lattice points along $F$ by
\begin{equation}\label{eq:visible-set}
    \mathrm{Visible}_F(N) \defeq \{(a,h)\in [1,N]^2 : \text{$(a,h)$ is visible along } F\}
\end{equation}
and the set of invisible lattice points along $F$ by
\begin{equation}\label{eq:invisible-set}
    \mathrm{Invisible}_F(N) \defeq \{(a,h)\in [1,N]^2 :  \text{$(a,h)$ is invisible along } F\}. 
\end{equation}
We write $\#\mathrm{Visible}_F(N)$ and $\#\mathrm{Invisible}_F(N)$ for
the number of visible and invisible lattice points, respectively. 
%Often the subscript will be dropped when the polynomial $F$ is clear from context.

Because $F(x)$ has positive leading coefficient, there exists a positive integer $n_F$ such that if $a>n_F$ and $1\le b<a$, then
%\begin{equation}
     $F(b)<F(a)$ and $F(a)>0$.
%\end{equation}

Define $\mathrm{Block}_F(a,b;N)$ to be the set 
\begin{equation}
    \{(a,h)\in [1,N]^2 : \exists\ k\in\Z \text{ such that $(b,k)$ blocks $(a,h)$ from being visible along $F$}\}.
\end{equation}
Note that when $N>n_F$, we have $k\in [1,N]$.

\begin{comment}}
Define $E_F(N)$ to be the set of invisible lattice points in $[1,N]^2$ blocked only by lattice points $(b,k)$ with $b\leq n_F$,
\begin{equation}
    E_F(N) \defeq \{ (a,h)\in [1,N]^2 : \text { if } (a,h)\in \mathrm{Block}_F(a,b;N), \text{ then } b\leq n_F\}.
\end{equation}
\end{comment}

Define $E_F(N)$ to be the set of invisible lattice points in $[1,N]^2$ whose blockers all have first coordinate at most $n_F$,
\begin{salign}
    E_F(N) &\defeq \{ (a,h)\in \mathrm{Invisible}_F(N) : \text { if } (a,h)\in \mathrm{Block}_F(a,b;N), \text{ then } b\leq n_F\}\\
    &= \sum_{a\leq N}\sum_{1\leq b\leq n_F} \#\mathrm{Block}_F(a,b;N).
\end{salign}

%An invisible lattice point $(a,h)\in [1,N]^2$ either has $a\leq n_F$, or else $a>n_F$ and $(a,h)\in \mathrm{Block}_F(a,b;N)$ for some $b$ with $1\leq b<a$. Separating out the blockers with $b\leq n_F$, we obtain
Then we have
\begin{equation}\label{eq:block_bound}
    \#\mathrm{Invisible}_F(N)\leq \sum_{a\leq N}\sum_{n_F< b<a} \#\mathrm{Block}_F(a,b;N) + \#E_F(N).
\end{equation}
%where $E_F(N)$ counts the invisible lattice points $(a,h)\in [1,N]^2$  whose only blockers have $b\leq n_F$. 
Note that there can be multiple lattice points blocking a given invisible lattice point, and therefore the inequality \eqref{eq:block_bound} need not be an equality.

\begin{lemma}\label{lem:E_f(N)_bound}
    We have $\# E_F(N)\ll_F N$.     
\end{lemma}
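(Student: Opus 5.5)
The bound follows from the second description of $E_F(N)$: it suffices to show that for each fixed $b\le n_F$ the total $\sum_{a\le N}\#\mathrm{Block}_F(a,b;N)$ is $\ll_F N$. Summing over the $n_F$ values of $b$, which form a finite set depending only on $F$, then gives $\#E_F(N)\ll_F N$. The points $(a,h)$ with $a\le n_F$ contribute at most $n_F\cdot N\ll_F N$ in any case, so we may also assume $a>n_F$. Then $F(a)>0$ and $F(a)>|F(b)|$ for all $b<a$; if needed, we enlarge $n_F$ to also exceed the values $|F(b)|$ for $b\le n_F$.

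Fix $b\le n_F$ and $a>n_F$. A point $(a,h)$ lies in $\mathrm{Block}_F(a,b;N)$ only if there is $t\in\Q$ with $h=tF(a)$ and $k=tF(b)$ a positive integer; in particular $F(b)\neq 0$ and $t>0$. Write $g=\gcd(F(a),F(b))$ and $F(a)=gA$, $F(b)=gB$ with $\gcd(A,B)=1$. The conditions $tgA\in\Z$ and $tgB\in\Z$ together with coprimality force $tg\in\Z$. Hence $h$ is a positive multiple of $A=F(a)/g$, and the number of admissible $h\le N$ is at most
\[
\frac{N}{A}=\frac{N\gcd(F(a),F(b))}{F(a)}\le \frac{N\,|F(b)|}{F(a)}.
\]
If $\deg F\ge 1$, then $F(a)\gg_F a^{\deg F}\ge a$ for $a>n_F$, so summing over $a\le N$ gives at most $N|F(b)|\sum_{a\le N}F(a)^{-1}$. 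For $\deg F\ge 2$ this is $\ll_F N$ outright. For $\deg F=1$ it is $\ll_F N\log N$, which is too weak, so a different bound is needed there.

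The main obstacle is removing the harmonic-sum logarithm in the linear case, and also handling it uniformly. The fix is to note that $h\ge A=F(a)/g\ge F(a)/|F(b)|$, so the count is empty unless $F(a)\le N|F(b)|$. That restricts $a\le C_F N^{1/\deg F}$, but for linear $F$ this is still $a\ll N$. In that case I would use the gcd structure instead: $g=\gcd(F(a),F(b))$ divides the fixed nonzero integer $F(b)$, so I sum over its divisors $e$. The number of $a\le N$ with $e\mid F(a)$ and $F(a)\ge e$ is handled by grouping, and $\sum_{a}e/F(a)$ restricted to $e\mid F(a)$ is $\ll_F\sum_{j\le N}1/j$ again. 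This shows the logarithm is genuine unless one invokes the paper's standing hypothesis. So I would simply note that Lemma~\ref{lem:E_f(N)_bound} is applied with $F=f^m$, where $\deg F=md\ge 4$. For those $F$ the sum $\sum_a F(a)^{-1}$ converges, giving $\#E_F(N)\ll_F N$.
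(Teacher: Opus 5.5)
Your argument is correct for $\deg F\ge 2$. That covers everything the paper uses: every polynomial with two distinct roots has degree at least $2$, and the lemma is only applied with $F=f^m$. However, the lemma is stated in Section~2 for an arbitrary nonconstant $F$ with positive leading coefficient, and you abandon the linear case there.

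Your justification for abandoning it is also false. You say the logarithm is genuine, but it is an artifact of replacing $E_F(N)$ by $\sum_{a\le N}\sum_{b\le n_F}\#\mathrm{Block}_F(a,b;N)$. The displayed ``second description'' is only an upper bound, and a lossy one: it throws away the defining property of $E_F(N)$, namely that its points have \emph{no} blocker with first coordinate $>n_F$. Take $F(x)=x$ and $n_F=1$. The blockers of $(a,h)$ are the points $(u,uh/a)$ with $1\le u<a$ and $q\mid u$, where $q=a/\gcd(a,h)$. Hence $E_F(N)=\{(2,h): h\le N,\ 2\mid h\}$ has only $\lfloor N/2\rfloor$ elements. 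By contrast, $\sum_{2\le a\le N}\#\mathrm{Block}_F(a,1;N)=\sum_{2\le a\le N}\lfloor N/a\rfloor\sim N\log N$.

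The missing idea is the paper's injectivity argument, which needs no gcd analysis and no degree hypothesis.
\begin{itemize}
\item For $(a,h)\in E_F(N)$ with $a>n_F$, let $(b,k)$ be its blocker with smallest first coordinate. This blocker is visible.
\item By the definition of $E_F(N)$, $b\le n_F$. Also $0<k<h\le N$, since $F(b)<F(a)$.
\item Distinct such points receive distinct blockers. Suppose $(b,k)$ blocked two points $(a_1,h_1)\ne(a_2,h_2)$ of $E_F(N)$ with $a_i>n_F$. Since $k>0$ forces $F(b)\ne0$, both use the same $t=k/F(b)$, so $a_1\ne a_2$; say $a_1<a_2$.
\item Then $(a_1,h_1)$ blocks $(a_2,h_2)$ with $a_1>n_F$, contradicting $(a_2,h_2)\in E_F(N)$.
\end{itemize}
So there are at most $n_FN$ such points. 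Adding the at most $n_FN$ points with $a\le n_F$ gives $\#E_F(N)\le 2n_FN$ for every admissible $F$.

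Your route instead bounds the larger set of all points that have some blocker with $b\le n_F$. That works for $\deg F\ge2$, but for linear $F$ that set really has order $N\log N$, so your route cannot give the lemma in full generality.
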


\begin{proof}
Partition the points $(a,h)\in E_F(N)$ according to whether $a\le n_F$ or $a>n_F$.
\begin{itemize}
\item If $a\le n_F$, then $(a,h)\in [1,n_F]\times [1,N]$, so there are at most $n_FN$ such points.
\item If $(a,h)\in E_F(N)$ with $a>n_F$, then $(a,h)$ is invisible, so it has a visible blocker $(b,k)$. By the definition of $E_F(N)$, every blocker of $(a,h)$ satisfies $b\le n_F$.
On the other hand, each visible lattice point $(b,k)$ with $b\leq n_F$ can block at most one point in $E_F(N)\cap [n_F+1, N]^2$, for if it blocked two distinct points, one would block the other, contradicting their simultaneous inclusion in $E_F(N)$. Therefore, the number of points in $E_F(N)$ with $a > n_F$ is bounded by the number of visible lattice points with $b \leq n_F$. For $N$ sufficiently large (e.g., $N>n_F$) this quantity is less than $n_F N$.  
\end{itemize}
Combining these bounds when $N>n_F$ yields $\# E_F(N) < 2n_F N = O_F(N)$.
\end{proof}

We now bound the size of the sets $\mathrm{Block}_F(a,b;N)$.

\begin{lemma}\label{lem:block_upper_bound}
    If $a,b\in \Z_{>0}$ are integers satisfying $n_F<b<a\leq N$, then
    \begin{equation}\label{eq:block_upper_bound}
        \#\mathrm{Block}_F(a,b; N)\leq N\, \frac{ \gcd(F(a), F(b))}{F(a)}.
    \end{equation}
\end{lemma}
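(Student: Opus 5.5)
The plan is to characterize exactly which heights $h$ can lie in $\mathrm{Block}_F(a,b;N)$ and then count them. Since $n_F<b<a$, both $F(a)$ and $F(b)$ are positive integers. Suppose $(a,h)\in \mathrm{Block}_F(a,b;N)$. Then there is a $t\in\Q$ and an integer $k$ with $h=t\,F(a)$ and $k=t\,F(b)$. Because $h>0$ and $F(a)>0$, we have $t>0$, and therefore $k>0$ as well.

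Set $g=\gcd(F(a),F(b))$, and write $F(a)=gA$ and $F(b)=gB$ with $\gcd(A,B)=1$. The key step is to show that $A\mid h$. From $t=h/F(a)=k/F(b)$ we obtain $hF(b)=kF(a)$, that is, $hB=kA$. Since $\gcd(A,B)=1$, this forces $A\mid h$. (Conversely, every such $h$ gives a blocker with $t=h/F(a)$ and $k=hB/A$, but we only need the forward direction.)

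It follows that the possible $h$ are positive multiples of $A=F(a)/g$ lying in $[1,N]$. There are $\lfloor N/A\rfloor\le N\,g/F(a)$ of them. Since the first coordinate is fixed at $a$, this gives
\[
\#\mathrm{Block}_F(a,b;N)\le N\,\frac{\gcd(F(a),F(b))}{F(a)}.
\]

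There is no real obstacle here; the only point needing care is the sign and positivity bookkeeping, namely ensuring $F(a),F(b)>0$ so that $g$ and $A$ are well defined positive integers. This is exactly what the hypothesis $b>n_F$ guarantees.
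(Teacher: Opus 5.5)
Your proposal is correct and follows essentially the same route as the paper: derive $hF(b)=kF(a)$, cancel $\gcd(F(a),F(b))$, use coprimality to get $F(a)/\gcd(F(a),F(b))\mid h$, and count the resulting multiples of this quotient in $[1,N]$. Your count via $\lfloor N/A\rfloor$ is the same as the paper's bound on $\lambda$.
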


\begin{proof}
If $(a,h)\in \mathrm{Block}_F(a,b;N)$, then there exists a $t\in \Q$ and $k\in[1,N]$ such that
\[
h=t\,F(a)\qquad \text{and} \qquad k=t\,F(b).
\]
This implies
\[
\frac{h}{F(a)}=t=\frac{k}{F(b)},
\]
and therefore
\[
h\,F(b)=k\,F(a).
\]
Dividing by the greatest common divisor of $F(a)$ and $F(b)$, we obtain
\begin{equation}
    k \frac{F(a)}{\gcd(F(a),F(b))} = h \frac{F(b)}{\gcd(F(a),F(b))}.
\end{equation}
Since $n_F<b<a$, we have $F(a),F(b)>0$. Hence
\[
\frac{F(a)}{\gcd(F(a),F(b))}
\qquad \text{and} \qquad
\frac{F(b)}{\gcd(F(a),F(b))}
\]
are coprime positive integers. It follows that $F(a)/\gcd(F(a),F(b))$ divides $h$, so there exists a $\lambda\in [1,N]$ such that
\begin{equation}
    h=\lambda \frac{F(a)}{\gcd(F(a), F(b))}.
\end{equation}
But we must also have $h\leq N$, so
\begin{equation}
    \lambda = h\,\frac{\gcd(F(a), F(b))}{F(a)}\leq N\, \frac{ \gcd(F(a), F(b))}{F(a)}.
\end{equation}
Since, for fixed $a$ and $b$, there is one choice of $h$ for each choice of $\lambda$, this implies the lemma.
\end{proof}

Define the double sum
\begin{equation}
    S_F(N)\defeq \sum_{a\leq N}\sum_{n_F< b<a}  \frac{ \gcd(F(a), F(b))}{F(a)}.
\end{equation}
By the inequality \eqref{eq:block_bound},  Lemma \ref{lem:E_f(N)_bound}, and Lemma \ref{lem:block_upper_bound}, we have
\begin{equation}\label{eq:invisible_S_F(N)_bound}
    \#\mathrm{Invisible}_F(N)\leq N\, S_F(N) + O_F(N).
\end{equation}
Therefore the density of visible lattice points is 
\begin{salign}
    D(F) 
    &= \lim_{N\to\infty}\frac{\#\mathrm{Visible}_F(N)}{N^2}\\
    &= 1 - \lim_{N\to\infty}\frac{\#\mathrm{Invisible}_F(N)}{N^2}\\
    &\geq 1 - \lim_{N\to\infty} \frac{N\, S_F(N) + O_F(N)}{N^2} \\
    &= 1 - \lim_{N\to\infty} \frac{S_F(N)}{N}.
\end{salign}
Thus we have the following sufficient condition for proving Conjecture \ref{conj:visibility_conjecture}:

\begin{proposition}\label{prop:S_F(N)_to_density_conjecture}
If $S_F(N) = o(N)$, then the Visibility Density Conjecture (Conjecture \ref{conj:visibility_conjecture}) is true for $F(x)$.
\end{proposition}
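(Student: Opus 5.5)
The proposition follows from the chain of inequalities displayed just before it; the only care needed is about limits, since the definition of $D(F)$ presupposes that a limit exists. I will therefore argue with limsup and liminf.

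First, from \eqref{eq:invisible_S_F(N)_bound}, which combines \eqref{eq:block_bound}, Lemma~\ref{lem:E_f(N)_bound} and Lemma~\ref{lem:block_upper_bound}, we have
\[
0\le \frac{\#\mathrm{Invisible}_F(N)}{N^2}\le \frac{S_F(N)}{N}+O_F\!\left(\frac1N\right).
\]
If $S_F(N)=o(N)$, the right-hand side tends to $0$. By squeezing, $\#\mathrm{Invisible}_F(N)/N^2\to 0$.

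Second, $[1,N]^2$ is the disjoint union of $\mathrm{Visible}_F(N)$ and $\mathrm{Invisible}_F(N)$, so
\[
\#\mathrm{Visible}_F(N)=N^2-\#\mathrm{Invisible}_F(N).
\]
Hence $\#\mathrm{Visible}_F(N)/N^2\to 1$. In particular the limit defining $D(F)$ exists and equals $1$, which is the conclusion of Conjecture~\ref{conj:visibility_conjecture} for $F$.

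There is one subtlety to check. Lemma~\ref{lem:block_upper_bound} is only applied for $N>n_F$, and the inequality \eqref{eq:block_bound} must account for every invisible point. An invisible $(a,h)$ with $a>n_F$ has some blocker $(b,k)$ with $b<a$. Either some blocker has $b>n_F$, in which case the point is counted in the double sum, or all blockers have $b\le n_F$, in which case it lies in $E_F(N)$. Points with $a\le n_F$ are covered by $E_F(N)$ as well, or directly contribute at most $n_FN=O_F(N)$. Since all of this is already established in the text, no step here is a real obstacle. The substantive difficulty, proving $S_F(N)=o(N)$ itself, lies outside this proposition.
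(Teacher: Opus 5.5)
Your proof is correct and follows the paper's argument: both use the bound $\#\mathrm{Invisible}_F(N)\le N\,S_F(N)+O_F(N)$ together with $\#\mathrm{Visible}_F(N)=N^2-\#\mathrm{Invisible}_F(N)$. Your squeeze version is slightly more careful than the paper's chain, which writes $\lim$ throughout and concludes only $D(F)\ge 1$, because you also show that the limit defining $D(F)$ exists.
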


The remainder of the paper is devoted to proving $S_F(N) = o(N)$ for the polynomials $F(x)$ in Theorem~\ref{thm:main}.
%% ===================================================================
%\section{Rearranging the the sum $S_F(N)$}\label{sec:rearranging}
%% ===================================================================
Our first step towards this goal is to rewrite $S_F(N)$.

Henceforth, let $f(x)\in \Z[x]$ be a polynomial of degree $d\geq 2$ with positive leading coefficient and having at least two distinct roots over $\overline{\Q}$, let $m\in \Z_{\geq 2}$, and let $F(x)=f(x)^m$. 

Since $f$ has positive leading coefficient, we may choose $n_f\in \Z_{\geq 0}$ such that if $a>n_f$ and $1\le b<a$, then $\max\{0, f(b)\}<f(a)$.
Since $f(a)>f(b)$ implies $f(a)^m>f(b)^m$, we may take $n_F$ to equal $n_f$.

By the definition of $n_f$, whenever $a>b>n_f=n_F$ we have
\[
f(a)>f(b)>0.
\]
Define positive integers
\begin{equation}\label{eq:gqr-def}
g(a,b) \defeq \gcd\bigl(f(a),\, f(b)\bigr), \quad
s(a,b) \defeq \frac{f(a)}{g(a,b)}, \quad \text{ and } \quad  r(a,b) \defeq \frac{f(b)}{g(a,b)}.
\end{equation}

Note that $\gcd(s(a,b), r(a,b)) = 1$ and $s(a,b) > r(a,b) \geq 1$. %, so $s \geq 2$.
Since $F(x)=f(x)^m$, we have
\begin{salign}
    \gcd(F(a),F(b)) 
 &= \gcd(f(a)^m,\ f(b)^m) \\
 &= \gcd((g(a,b)\ s(a,b))^m,\ (g(a,b)\ r(a,b))^m) \\
 &= g(a,b)^m \gcd(s(a,b)^m,\ r(a,b)^m) \\
 &= g(a,b)^m.
\end{salign}
Therefore
\begin{equation}\label{eq:gcd-ratio}
\frac{\gcd(F(a), F(b))}{F(a)}
  = \frac{g(a,b)^m}{f(a)^m}
  = \frac{1}{s(a,b)^m}.
\end{equation}
For each $s,r\in \Z_{>0}$ with $s>r$ define the counting function
\begin{equation}
    M_{s,r}(N) \defeq \#\{ (a,b) \in [n_F+1, N]^2 : s(a,b)=s\ \text{ and }\ r(a,b)=r\}. 
\end{equation}
Note that if $a\leq N$, then $s(a,b)\leq f(a)\leq f(N)\ll_f N^d$, and therefore, if $s> f(N)$, then $M_{s,r}(N)=0$. 

The sum $S_F(N)$ from Section \ref{sec:visibility} can be written in terms of the counting functions $M_{s,r}(N)$ as follows:
\begin{equation}\label{eq:S-rearranged}
 S_F(N) 
  = \sum_{n_F < b < a \leq N}  \frac{1}{s(a,b)^m}
  = \sum_{2 \leq s \leq f(N) }\;\sum_{r=1}^{s-1}
      \frac{M_{s,r}(N)}{s^m}.
\end{equation}

%% ===================================================================
\section{Density one for polynomials of arbitrary degree}\label{sec:general}
%% ===================================================================

In this section we prove Theorem~\ref{thm:main2}.
To do this we bound the counting function $M_{s,r}(N)$ by the number of integer solutions to certain polynomial equations (Proposition \ref{prop:Msr-upper_bound}) and then apply a bound of Pila for the number of integer solutions to polynomials.

%\subsection{An upper bound for $M_{s,r}(N)$}

\begin{proposition}\label{prop:Msr-upper_bound}
    For each $s,r\in \Z_{>0}$ with $s>r$, define the polynomial
    \begin{equation}
        G_{s,r}(x,y) \defeq s f(y) - r f(x).
    \end{equation}
    Then
    \begin{equation}
        M_{s,r}(N) \leq \#\{ (a,b)\in [1,N]^2 : \ G_{s,r}(a,b)=0\}.
    \end{equation}
\end{proposition}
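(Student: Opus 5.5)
The plan is to exhibit an injection from the set counted by $M_{s,r}(N)$ into the zero set of $G_{s,r}$ in $[1,N]^2$. The natural candidate is the identity map $(a,b)\mapsto(a,b)$, possibly with the coordinates swapped to match the convention $G_{s,r}(x,y)=s f(y)-r f(x)$. So the whole proof amounts to checking that every pair $(a,b)$ with $s(a,b)=s$ and $r(a,b)=r$ satisfies the polynomial equation.

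First, fix $(a,b)\in[n_F+1,N]^2$ with $s(a,b)=s$ and $r(a,b)=r$. We must first confirm that $s(a,b)$ and $r(a,b)$ make sense here. They were defined as positive integers in \eqref{eq:gqr-def} when $a>b>n_F$. Since $s>r$ forces $f(a)>f(b)$, and hence $a>b$ on this range (by the choice of $n_f$), the counted pairs do satisfy $n_F<b<a\le N$. In particular $f(a),f(b)>0$ and $g(a,b)$ is a positive integer.

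Next, write $f(a)=g\,s$ and $f(b)=g\,r$ with $g=g(a,b)$. Then
\[
r f(a) = r g s = s f(b),
\]
so $s f(b)-r f(a)=0$. With $G_{s,r}(x,y)=s f(y)-r f(x)$, this says exactly $G_{s,r}(a,b)=0$. Since $1\le n_F+1\le a,b\le N$, the pair $(a,b)$ lies in $[1,N]^2$. The map is the inclusion, which is trivially injective, and this gives the stated inequality.

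The only point requiring any care is the bookkeeping of which variable is which in $G_{s,r}$, together with confirming that $s(a,b)$ and $r(a,b)$ are well-defined positive integers for every pair counted by $M_{s,r}(N)$. That check is what the positivity and monotonicity guaranteed by $n_F$ handle. Apart from this there is no real obstacle.
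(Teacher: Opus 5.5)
Your proposal is correct and is the paper's argument: write $f(a)=s\,g(a,b)$ and $f(b)=r\,g(a,b)$, then conclude $s f(b)=sr\,g(a,b)=r f(a)$, i.e.\ $G_{s,r}(a,b)=0$. Your extra check that every counted pair satisfies $n_F<b<a$ is harmless bookkeeping that the paper leaves implicit.
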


\begin{proof}
    We show that if $(a,b)$ is counted by $M_{s,r}(N)$, then $G_{s,r}(a,b)=0$.
    If $(a,b)$ is counted by $M_{s,r}(N)$, then 
    \begin{equation}
        \frac{f(a)}{\gcd(f(a),f(b))} = s
        \qquad \text{ and } \qquad
        \frac{f(b)}{\gcd(f(a),f(b))} = r.
    \end{equation}
    Multiplying both equalities by $g(a,b)=\gcd(f(a),f(b))$ gives
    \begin{equation}
        f(a) = s\, g(a,b)\quad \text{ and } \quad f(b) = r\, g(a,b).
    \end{equation}
    Hence $s\,f(b) = srg(a,b) = r\,f(a)$, so that $G_{s,r}(a,b)=0$.
\end{proof}

\noindent Our strategy will be to apply the following bound of Pila, which built off of his work with Bombieri on the real determinant method \cite{BombieriPila1989}. 

\begin{theorem}[{\cite[Theorem~2]{Pila1995}}]\label{thm:pila}
If $G(x,y)\in \R[x,y]$ is an irreducible polynomial of
degree $\delta \geq 2$,
then for every $\varepsilon > 0$,
\begin{equation}
\#\left\{(a,b) \in [1,N]^2 :  G(a,b)=0 \right\}
  \ll_{\delta, \varepsilon} N^{1/\delta + \varepsilon},
\end{equation}
where the implied constant  depends only on $\delta$
and~$\varepsilon$, and not on the specific polynomial~$G$.
\end{theorem}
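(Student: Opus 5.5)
This is Pila's theorem, quoted from \cite{Pila1995}; the paper uses it as a black box. If I had to prove it myself, I would follow the Bombieri--Pila real determinant method \cite{BombieriPila1989}.

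\textbf{Step 1 (arcs).} Let $C=\{G=0\}\cap[1,N]^2$. Since $G$ is irreducible of degree $\delta\ge 2$, $C$ contains no line segment. Removing the $O(\delta^2)$ points where $\partial G/\partial x$, $\partial G/\partial y$, or the second-derivative (inflection) expression vanishes splits $C$ into $O_\delta(1)$ arcs. On each arc, after possibly swapping $x$ and $y$, the curve is a graph $y=\varphi(x)$ with $|\varphi'|\le 1$ and $\varphi''$ of constant sign. The number of arcs, and the number of pieces in every later subdivision, must depend only on $\delta$; this is what makes the implied constant independent of $G$.

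\textbf{Step 2 (determinant lemma).} Fix an integer $D\ge\delta$ depending on $\varepsilon$. Let $V_D$ be the space of polynomials of degree $\le D$ in $x,y$ taken modulo multiples of $G$, and let $s=\dim V_D$. Then $s=\delta D+O(\delta^2)$, and this count $s\approx\delta D$ is exactly where the exponent $1/\delta$ will come from. Fix a basis $\{x^iy^j\}$ of $V_D$.

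Take any $s$ integer points $P_1,\dots,P_s$ lying on a subarc of horizontal length $L$. The determinant $\Delta=\det(P_k^{\,(i,j)})$ is an integer. Expand each monomial in $x$ along the arc by Taylor's formula, i.e.\ substitute $y=\varphi(x)$ and use divided differences. This gives
\[
|\Delta|\ll_{D} N^{e_1}L^{e_2}\prod(\text{derivative sizes}),
\]
where $e_1=\sum(i+j)\approx \delta D^2/2$ and $e_2=\binom{s}{2}\approx \delta^2D^2/2$.

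On a subarc where the derivatives $\varphi^{(k)}$ are suitably controlled, choose $L=N^{1-1/\delta+\eta}$ with $\eta\to0$ as $D\to\infty$. Then $|\Delta|<1$, so $\Delta=0$. Consequently all integer points on the subarc lie on one auxiliary curve $H=0$ with $H$ of degree $\le D$ and $G\nmid H$. By B\'ezout, that curve meets $C$ in at most $\delta D$ points.

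\textbf{Step 3 (count).} The arc has horizontal length $\le N$, so it splits into $N/L\approx N^{1/\delta-\eta}$ subarcs. Each contributes at most $\delta D$ integer points, giving a total of $\ll_{\delta,\varepsilon}N^{1/\delta+\varepsilon}$ once $D=D(\varepsilon)$ is chosen large.

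\textbf{Main obstacle.} The hard part is making Step 2 uniform in $G$. One needs a decomposition of each arc into $O_{\delta,D}(1)$ pieces on which the derivatives $\varphi^{(k)}$, for $k\le s$, satisfy a regularity condition of the form
\[
|\varphi^{(k)}|\ll (\text{scale})^{1-k}.
\]
The determinant estimate only works under such control. I would obtain it as Bombieri--Pila do, by rescaling the curve to the unit box (dividing by $N$) and applying their lemma that an algebraic arc of degree $\delta$ splits into $O_{\delta,k}(1)$ pieces on which each of $\varphi',\dots,\varphi^{(k)}$ is monotone and of constant sign; this follows by B\'ezout applied to the algebraic curves defined by $\varphi^{(j)}$. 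Once this is in place, only the bookkeeping of exponents in $|\Delta|$ remains.
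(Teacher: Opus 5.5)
The paper gives no proof of this statement. It is quoted from Pila, building on the Bombieri--Pila determinant method, so deferring to the citation, as in your first sentence, is exactly what the paper does. The sketch you add, however, does not go through as written, and it breaks at the point you call the main obstacle.

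Steps 2 and 3 use a single subarc length $L$ along an arc of length up to $N$. That needs $|\varphi^{(k)}|\ll N^{1-k}$ for all $k\le s$ on the whole arc. The monotonicity lemma does not give this, and no bounded subdivision can. Take $xy=N$ with $\sqrt N\le x\le N$. Here $\varphi(x)=N/x$ has $|\varphi'|\le 1$ and every derivative monotone of constant sign on the entire arc. Yet $|\varphi^{(k)}(\sqrt N)|=k!\,N^{(1-k)/2}$, so after dividing by $N$ the $k$-th derivative near the left end has size about $N^{(k-1)/2}$. Your bound for $|\Delta|$ on subarcs of length $L$ there is then off by a power of $N$.

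What monotonicity together with $|\varphi'|\le1$ actually gives, on each monotone piece $I$, is the local estimate $|\varphi^{(k)}(x)|\ll_k \mathrm{dist}(x,\partial I)^{1-k}$. This follows by induction on $k$: integrate $|\varphi^{(k)}|$ over a half-neighbourhood and use the bound for $\varphi^{(k-1)}$. The estimate degenerates at the ends of $I$, so the count must be done scale by scale:
\begin{itemize}
\item Split $I$ into Whitney intervals $J$ with $|J|\asymp \mathrm{dist}(J,\partial I)$. There are $O(1)$ of each dyadic length $\lambda\le N$, and the leftover bits of length $<1$ contain $O(1)$ lattice points.
\item On each $J$, translate by an integer point near its left end, replacing $G$ by $G(x+a_0,y+b_0)$, which is still irreducible of degree $\delta$. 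Then the relevant coordinates are $O(\lambda)$ and the derivatives rescaled by $\lambda$ are $O(1)$.
\item Run your determinant argument at scale $\lambda$ instead of $N$. This gives $\ll\lambda^{1/\delta+\eta}$ subarcs in $J$.
\item The geometric sum over dyadic $\lambda$ then gives $N^{1/\delta+\eta}$.
\end{itemize}
This multi-scale step is where uniformity in $G$ is actually obtained, so it is more than bookkeeping.

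A smaller correction: with bounded rescaled derivatives the estimate is $|\Delta|\ll N^{e_1}(L/N)^{e_2}$. Since $e_1/e_2=1/\delta+O_\delta(1/D)$ from above, the admissible length is $L=N^{1-1/\delta-\eta}$ with $\eta>0$, not $N^{1-1/\delta+\eta}$.
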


\begin{comment}
\textcolor{red}{A $\overline{\mathbb{Q}}$-irreducible factor of a $\mathbb{Q}$-polynomial can have non-real coefficients e.g., $y^2 + x^2 \in \mathbb{Q}[x,y]$ factors as $(y-ix)(y+ix)$ over $\overline{\mathbb{Q}}$, and neither factor lies in $\mathbb{R}[x,y]$. So Pila's theorem as stated (for $\mathbb{R}[x,y]$) does not apply to those factors directly. We could: factor $G_{s,r}$ first over $\mathbb{Q}$ (which equals factoring over $\mathbb{R}$, by Gauss), apply Pila to each $\mathbb{Q}$-irreducible factor $Q_j$, and note that $\deg(Q_j)$ is a sum of equal degrees of its $\overline{\mathbb{Q}}$-conjugate pieces and hence $\geq \delta_F$.}

\textcolor{blue}{TP: Factoring over $\Q$ is not the same as factoring over $\R$. For example $x^2-2=(x-\sqrt{2})(x+\sqrt{2})$. But you are right that one cannot apply Theorem 3.2 to a factorization over $\overline{\Q}$. Thanks for catching this! I have rewritten things so that Theorem 3.2 is applied to  a factorization over $\R$. I believe this avoids the analysis of $\overline{\Q}$-conjugate pieces of the factorization, but let me know if I am mistaken. }
\end{comment}

However, the polynomials $G_{s,r}(x,y)$ need not be irreducible. We proceed by applying Theorem \ref{thm:pila} to each irreducible factor of $G_{s,r}(x,y)$ over $\R$.
%, noting that irreducibility over $\overline{\Q}$ implies irreducibility over $\R$. 
Checking the hypotheses of Theorem \ref{thm:pila} for $G_{s,r}(x,y)$, we show that it has no linear factor over $\overline{\Q}$, which implies that it has no linear factor over $\R$.

\begin{proposition}[No linear factor]\label{prop:no-linear}
    Let $f \in \Z[x]$ be a polynomial of degree $d \geq 2$ with at least two distinct roots
    in $\overline{\Q}$, and let $r,s\in \Z_{\geq 1}$ be positive integers with $r<s$. 
    Then  the polynomial
    \begin{equation}
         G_{s,r}(x,y) = s\,f(y) - r\,f(x) \in \Q[x,y]
    \end{equation}
    has no linear factor over $\overline{\Q}$.
\end{proposition}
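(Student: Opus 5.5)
Suppose for contradiction that $G_{s,r}$ has a linear factor $L(x,y)=\alpha x+\beta y+\gamma$ over $\overline{\Q}$. We split into cases according to whether $\beta=0$, $\alpha=0$, or both are nonzero.

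\emph{Case $\beta=0$.} Then $\alpha\neq 0$ and $L$ vanishes on the line $x=x_0$ with $x_0=-\gamma/\alpha$. Substituting gives $s f(y)-r f(x_0)\equiv 0$ as a polynomial in $y$. This is impossible, since $s\neq 0$ and $f$ has degree $d\ge 2$.

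\emph{Case $\alpha=0$.} This is symmetric: $s f(y_0)-r f(x)\equiv 0$ in $x$ is impossible because $r\neq 0$.

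\emph{Case $\alpha,\beta\neq 0$.} Then $L$ vanishes exactly on the line $y=\lambda x+\mu$ with $\lambda\neq 0$, and we obtain the polynomial identity
\begin{equation*}
s\,f(\lambda x+\mu)= r\, f(x).
\end{equation*}
Comparing leading coefficients gives $s\lambda^d=r$. Comparing root multisets shows that the affine map $\phi(x)=\lambda x+\mu$ permutes the roots of $f$ with multiplicities: $\phi$ sends each root $\rho$ of $f(x)$ to a root of $f$, and $\phi^{-1}$ permutes the finite set $R$ of distinct roots. Since $|R|\ge 2$, the map $\phi$ permutes a finite set containing at least two points. Hence some power $\phi^k$ with $k\ge 1$ fixes $R$ pointwise. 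An affine map fixing two distinct points is the identity, so $\phi^k=\mathrm{id}$, which forces $\lambda^k=1$. Thus $|\lambda|=1$, and therefore $|s/r|=|\lambda|^{-d}=1$, i.e. $s=r$ (both are positive). This contradicts $r<s$.

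Over $\overline{\Q}$, $\lambda$ is algebraic, but the modulus argument still applies after fixing an embedding $\overline{\Q}\subset\mathbb{C}$.

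The main step is the permutation argument in the third case. One must check carefully that the root-multiset comparison is legitimate, i.e. that the roots of $f(\lambda x+\mu)$ are exactly $\phi^{-1}(R)$. Then the finiteness of $R$, together with the fact that the permutation is induced by an affine map fixing at least two points after iteration, gives $\lambda$ a root of unity. Alternatively, one could compare the average of the roots and their spread (for instance, the diameter of $R$ scales by $|\lambda|^{-1}$ under $\phi^{-1}$ while $R$ is preserved), which gives $|\lambda|=1$ directly using only $|R|\ge 2$.
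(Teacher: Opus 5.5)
Your proof is correct and takes essentially the same route as the paper: the same case split on $\alpha,\beta$, and in the main case the same observation that the affine map permutes the roots of $f$, so some iterate fixes at least two points and is therefore the identity. The only difference is cosmetic: you extract $r=s$ from the leading-coefficient relation $s\lambda^d=r$ together with $\lambda^k=1$, whereas the paper iterates the functional identity $k$ times to get $r^k f=s^k f$; on your route, $(r/s)^k=\lambda^{dk}=1$ for the positive rational $r/s$ already gives $r=s$, so the embedding into $\mathbb{C}$ is not needed.
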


\begin{proof}
    Suppose for contradiction that $G_{s,r}(x,y)$ admits a linear factor over $\overline{\Q}$. Then there exist $\alpha,\beta,\gamma\in \overline{\Q}$, not all zero, such that
    \begin{equation}
        L(x,y) = \alpha x + \beta y + \gamma
    \end{equation}
    divides $G_{s,r}(x,y)$. 

    First we consider the case $\alpha=0$. In this case $L(x,y)=\beta y +\gamma$, so the substitution $y=-\gamma/\beta$ gives
    \begin{equation}
        s\ f(-\gamma/\beta) - r\ f(x) = 0. 
    \end{equation}
    Therefore $f(x)$ must equal the constant $s\ f(-\gamma/\beta)/r$, contradicting the assumption $\deg(f)\geq 2$. 
     A similar argument shows that $\beta=0$ is impossible.

    We now consider the case $\alpha\beta\neq 0$. In this case $x = -(\beta y +\gamma)/\alpha$. Substituting this into $G_{s,r}(x,y)$, we obtain the polynomial identity
    \begin{equation}\label{eq:polynomial_identity}
        s\ f(y) = r\ f\left(-\frac{\beta y + \gamma}{\alpha}\right). 
    \end{equation}
    %Let $c_d$ denote the leading coefficient of $f(x)$. Then, by comparing leading coefficients of \eqref{eq:polynomial_identity}, we have
    %\begin{equation}
    %    s c_d = r c_d (-\beta/\alpha)^d.
    %\end{equation}
    
    Consider the map
    \begin{salign}
        T:\overline{\Q} &\to \overline{\Q}\\
            z & \mapsto -\frac{\beta z +\gamma}{\alpha}.
    \end{salign}
    Since $\alpha\neq 0$ this map is well defined, and since $\beta\neq 0$ it is injective.\footnote{In fact, $\beta\neq 0$ implies that the map is a bijection, although we will only need injectivity to prove the proposition.}
    
    Let 
    \begin{equation}
        R_f\defeq \{\rho\in \overline{\Q} : f(\rho)=0\}
    \end{equation}
    denote the set of roots of $f(x)$. From the polynomial identity \eqref{eq:polynomial_identity}, for each root $\rho\in R_f$, we have
    \begin{equation}
        0 = s\ f(\rho) = r\ f(T(\rho)).
    \end{equation}
    Since $T$ is injective, this implies that $T$ permutes the (finite) set of roots $R_f$ (i.e., $T$ corresponds to an element in the permutation group of the set $R_f$). Therefore, there exists a positive integer $k\in \Z_{>0}$, such that if we compose $T$ with itself $k$ times, then it will correspond to the identity map on $R_f$. By the assumption that $f$ has at least two distinct roots, this implies that the map $T^k:\overline{\Q}\to\overline{\Q}$ fixes at least $\# R_f\geq 2$ points. But any map of the form $z \mapsto az+b$ that fixes two distinct points must be the identity map (i.e., $a=1$ and $b=0$). This implies $T^k(z)=z$. From this, and iterating the identity \eqref{eq:polynomial_identity}, we have
    \begin{salign}
        r^k f(z)
        &= r^k f(T^k(z)) \\
        &= r^{k-1}\left( r f(T(T^{k-1}(z))) \right)\\
        &= r^{k-1} s f(T^{k-1}(z))\\
        &= r^{k-2} s \left( f(T(T^{k-2}(z)))\right)\\
        &= r^{k-2} s^2 f(T^{k-2}(z)) \\
        &= \cdots\\
        &= s^k f(z).
    \end{salign}
    Therefore $r^k=s^k$, contradicting the assumption $r<s$. 
\end{proof}

%\subsection{Uniform pair count}

\begin{lemma}[Uniform pair count]\label{lem:pair-count-hd}
%Under the standing hypotheses, 
Let $\delta_F\in \Z_{>0}$ be the largest integer such that for all $s,r\in \Z_{>0}$ with $s>r$, the polynomial $G_{s,r}(x,y)$ has no irreducible factors over $\R$ of degree less than $\delta_F$.
For every pair of integers $s > r \geq 1$ and every $\varepsilon > 0$, we have
\begin{equation}
M_{s,r}(N) \ll_{d,\varepsilon} N^{1/\delta_F + \varepsilon}.
\end{equation}
%where the implied constant depends only on $d$ and~$\varepsilon$.
\end{lemma}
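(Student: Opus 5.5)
By Proposition~\ref{prop:Msr-upper_bound}, it suffices to bound the number of integer zeros $(a,b)\in[1,N]^2$ of $G_{s,r}(x,y)=s\,f(y)-r\,f(x)$, uniformly in $s$ and $r$. The plan is to factor $G_{s,r}$ over $\R$ and apply Theorem~\ref{thm:pila} to each irreducible factor. This yields a bound that depends only on the degrees of the factors, not on their coefficients.

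First, $\delta_F$ is well defined and satisfies $\delta_F\geq 2$. Every irreducible factor of $G_{s,r}$ has degree at least $1$. An irreducible factor over $\R$ of degree $1$ would be a linear factor over $\overline{\Q}$ after scaling, or at least over $\mathbb{C}$, and the proof of Proposition~\ref{prop:no-linear} works verbatim over $\mathbb{C}$. Hence no factor has degree $1$. Moreover, $G_{s,r}$ is nonzero, since its $x^d$ coefficient is $-r c_d\neq 0$. So the set of admissible integers is nonempty, contains $2$, and is bounded by $d$ because $\deg G_{s,r}=d$. The largest such integer $\delta_F$ therefore exists and $2\le\delta_F\le d$.

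Next, fix $s>r$ and write $G_{s,r}=c\prod_{j=1}^{k} Q_j$ over $\R$, with each $Q_j$ irreducible of degree $\delta_j$. Then $\delta_F\le\delta_j\le d$ and $k\le d$. Every zero of $G_{s,r}$ is a zero of some $Q_j$, so
\[
\#\{(a,b)\in[1,N]^2: G_{s,r}(a,b)=0\}\le \sum_{j=1}^{k}\#\{(a,b)\in[1,N]^2: Q_j(a,b)=0\}.
\]
Applying Theorem~\ref{thm:pila} to each $Q_j$, whose degree is at least $2$, bounds the $j$-th term by $C(\delta_j,\varepsilon)N^{1/\delta_j+\varepsilon}\le C(\delta_j,\varepsilon)N^{1/\delta_F+\varepsilon}$. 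The sum therefore has at most $d$ terms, each with a constant from the finite list $C(\delta,\varepsilon)$ for $2\le\delta\le d$. Setting $C_d(\varepsilon)=d\max_{2\le\delta\le d}C(\delta,\varepsilon)$ gives $M_{s,r}(N)\le C_d(\varepsilon)N^{1/\delta_F+\varepsilon}$, uniformly in $s$ and $r$.

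The main point needing care is uniformity. Pila's constant is independent of the coefficients, and the number and degrees of the factors are bounded by $d$, so the bound does not depend on $s$ or $r$. A secondary point is the transfer from the $\overline{\Q}$ statement of Proposition~\ref{prop:no-linear} to real factors. A real linear factor is also a complex linear factor, and the argument there uses only that the coefficients lie in an algebraically closed field containing the roots of $f$, so it applies over $\mathbb{C}$ without change.
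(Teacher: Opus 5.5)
Your proposal is correct and follows the paper's proof. You reduce via Proposition~\ref{prop:Msr-upper_bound} to counting zeros of $G_{s,r}$ in $[1,N]^2$, factor $G_{s,r}$ over $\R$, apply Theorem~\ref{thm:pila} to each irreducible factor, and sum. You are also slightly more careful than the paper, which leaves implicit (invoking Proposition~\ref{prop:no-linear} only in the proof of Theorem~\ref{thm:main2}) three points you make explicit: every real factor has degree at least $2$ so Pila applies, $2\le\delta_F\le d$ is well defined, and the number of factors and the Pila constants are controlled by $d$ alone.
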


\begin{proof}
By Proposition \ref{prop:Msr-upper_bound} we have
    \begin{equation}\label{eq:msr-bound}
        M_{s,r}(N) \leq \#\{ (a,b)\in [1,N]^2 :  G_{s,r}(a,b)=0\}.
    \end{equation}
Let $G_{s,r}(x,y)=g_1(x,y)\cdots g_t(x,y)$ be a factorization of $G_{s,r}(x,y)$ into irreducible polynomials $g_i(x,y)\in \R[x,y]$. 
%By Proposition \ref{prop:no-linear} the degree of each $g_i(x,y)$ is at least $2$. 
Applying Theorem \ref{thm:pila} to each $g_i(x,y)$, we have
\begin{salign}\label{eq:pila-bound}
    & \#\{ (a,b)\in [1,N]^2 :  G_{s,r}(a,b)=0\}\\
    & \qquad \leq \sum_{i=1}^t \#\{ (a,b)\in [1,N]^2 :  g_{i}(a,b)=0\} \\
    & \qquad \ll_{d, \varepsilon} \sum_{i=1}^t N^{1/\deg(g_i)+\varepsilon} \\
    & \qquad \ll_{d,\varepsilon} N^{1/\delta_F+\varepsilon}.
\end{salign}
Combining the bounds \eqref{eq:msr-bound} and \eqref{eq:pila-bound} proves the lemma. 
\end{proof}

\begin{proof}[Proof of Theorem \ref{thm:main2}.]
By equation~\eqref{eq:S-rearranged} and Lemma~\ref{lem:pair-count-hd},
\begin{equation}  \label{eq:S-hd-bound}
S_F(N)
  \ll_{d,\varepsilon}
    \sum_{s=2}^{f(N)}\;\sum_{r=1}^{s-1}
    \frac{N^{1/\delta_F+\varepsilon}}{s^m}
  \ll_{d,\varepsilon}
    N^{1/\delta_F+\varepsilon}
    \sum_{s = 2}^{f(N)} \frac{1}{s^{m-1}}.
\end{equation}
If $m>2$ then $m-1\geq 2$, and therefore
\begin{equation}\label{eq:m>2_sum_bound}
     \sum_{s = 2}^{f(N)} \frac{1}{s^{m-1}} \leq \sum_{s = 2}^{\infty} \frac{1}{s^{m-1}} \ll_{m} 1.
\end{equation}
If $m=2$ then $m-1=1$, and for any $\varepsilon>0$ we have
\begin{equation}\label{eq:m=2_sum_bound}
     \sum_{s = 2}^{f(N)} \frac{1}{s} \leq 1 + \log(f(N)) \ll_{f} 1 + d\log(N) \ll_{f, \varepsilon} 1+N^{\varepsilon}.
\end{equation}
Combining the bounds \eqref{eq:S-hd-bound}, \eqref{eq:m>2_sum_bound}, and \eqref{eq:m=2_sum_bound}, for any $\varepsilon>0$ we have
\begin{equation}
    S_F(N) \ll_{F,\varepsilon} 
        N^{1/\delta_F+\varepsilon}.  
\end{equation}
This, together with the inequality \eqref{eq:invisible_S_F(N)_bound} and Proposition \ref{prop:no-linear}, implies Theorem \ref{thm:main2}.
\end{proof}

\bibliographystyle{alpha}
\bibliography{references}

\end{document}